\def\kdm1{_{k-1}\drop}
\let\span\Span
\title{An optimization derivation of the method of conjugate gradients}
\date{\today}
\author{David Ek\thanks{\footTO} \and Anders Forsgren\addtocounter{footnote}{-1}\footnotemark}
\def\footTO{Optimization and Systems Theory, Department of
  Mathematics, KTH Royal Institute of Technology, SE-100 44 Stockholm,
  Sweden ({\tt daviek@kth.se,andersf@kth.se}). \newline \indent \indent Research supported by the Swedish Research Council (VR).}
\begin{document}
\maketitle\thispagestyle{empty}

\begin{abstract}
We give a derivation of the method of conjugate gradients based on the
requirement that each iterate minimizes a strictly convex quadratic on the space spanned by the previously observed gradients. Rather than verifying that the search direction has the correct properties, we show that generation of such iterates is equivalent to generation of orthogonal gradients which gives the description of the direction and the step length.

Our approach gives a straightforward way to see that the search
direction of the method of conjugate gradients is a negative scalar
times the gradient of minimum Euclidean norm evaluated on the affine span of the
iterates generated so far.

\medskip\noindent
  {\bf Keywords.} Method of conjugate gradients, orthogonal gradients, gradient of minimum Euclidean norm.
  
\end{abstract}

\section{Introduction}
In this work we consider the classical method of conjugate gradients
as introduced by Hestenes and Steifel \cite{HS52}. For some history on
the development of methods of this type, see the review by Golub and
O'Leary \cite{GO89}. Let the function $q : \mathbb{R}^n \to
\mathbb{R}$ be defined by
\[
q(x) = \half x\T H x + c\T x,
\]
for $H \in \mathbb{R}^{n \times n}$ symmetric positive definite and $c \in \mathbb{R}^n$. The method of conjugate
gradients aims at finding a solution to $\grad q(x)=0$, i.e.,
$Hx+c=0$. Here and throughout,
$g_k$ denotes $\grad q(x_k)$, i.e., $g_k = Hx_k+c$. It is well known
that iteration $k$ of the method can be characterized by finding a
search direction $p_k^{CG}$, given by
\begin{subequations}\label{eqn-pkCG}
\begin{equation} \label{eqn-pkCGp} 
p^{CG}_k = - g_k + \frac{g_k^T g_k}{g_{k-1}^T g_{k-1}} p^{CG}_{k-1},
\text{with} p^{CG}_0=-g_0.
\end{equation}
The form (\ref{eqn-pkCGp}) can by expansion also be written in terms of gradients only as
\begin{equation}\label{eqn-pkCGg}
p_k^{CG}  =  - g_k^T g_k \sum_{i=0}^k \frac1{g_i^T g_i} g_i.
\end{equation}
\end{subequations}
The next iterate $x_{k+1}$ is found by exact linesearch. This means finding
$\theta_k$ such that $q(x_k+\theta p_k)$ is minimized, i.e., $p_k^T
(g_k + \theta_k H p_k)=0$. Consequently $\theta_k$ is given by
\begin{equation}\label{eqn-theta}
\theta_k=-\frac{g_k^T p_k}{p_k^T H p_k}.
\end{equation}
Finally $x_{k+1} = x_k + \theta_k p_k$. 

The derivation of $p_k^{CG}$ given by (\ref{eqn-pkCG}) is done in many
ways in the literature, see
e.g.,~\cite{NW06,GMW81,H80,P09,B15,PAENSO16,CGT00,F81,BC15,LY16,SY06,P04,AD74,CZ13,K95}
for derivations in an optimization framework and
e.g.,~\cite{GV13,S03,LS13,AK08,BKK17} for derivations in a numerical
linear algebra framework. In either case, the aim is typically to
directly derive $p_k^{CG}$ on the form (\ref{eqn-pkCGp}).

  
In an optimization framework, the approach is often for a given $x_0$ to generate a sequence of points $x_k$, $k=1,2,\dots$, where $x_k$ is characterized as the minimizer of $q(x)$ on an expanding subspace. In particular, a sequence of points $x_k$,
$k=1,2,\dots$, where $x_k$ is the minimizer of 
\begin{equation} \label{eq:OPk}
\begin{array}{cl} 
\text{minimize} { q(x)} \quad \text{subject to}  { x \in x_0 + \mathcal{S}_k,}
\end{array}
\end{equation}
where $x_0 + \mathcal{S}_k= \{ x : x = x_0 + s, \ s \in \mathcal{S}_k
\}$ for $\mathcal{S}_k$ as a $k$-dimensional space spanned by vectors
$s_0,\dots, s_{k-1}$, i.e., $\mathcal{S}_k = \span (\{s_0,
  \dots, s_{k-1}\} )$.  Typically $\mathcal{S}_k$ is
chosen as the space spanned by the generated search directions $p_0,
\dots, p_{k-1}$. The formula for $p_k^{CG}$ of (\ref{eqn-pkCGp}) is then
derived by utilizing relations which follow from $x_k$ being the
solution of (\ref{eq:OPk}), often under the assumption that the method
generates conjugate directions. There are also many derivations which
are based on verifying that $p_k$ of (\ref{eqn-pkCGp}) has the
required properties.

\section{A derivation of the method of conjugate gradients}
Our aim is to give a characterization of $x_k$ as found by generating
orthogonal gradients. Then the search direction $p_k^{CG}$ and the step length $\theta_k$ are derived
as a consequence of this characterization. This means that rather than
verifying that the formulas (\ref{eqn-pkCG}) and (\ref{eqn-theta}) are correct, we show
that they must take this form. 


Similarly to many other derivations, our approach is for a given $x_0$ to generate a sequence of points $x_k$, $k=1,2,\dots$, where $x_k$ is characterized as the minimizer of $q(x)$ on an expanding subspace. In contrast, we choose a sequence of points $x_k$, $k=1,2,\dots$, where $x_k$ is the minimizer of (\ref{eq:OPk}) with $\mathcal{S}_k = \span (\{ g_0,\dots, g_{k-1} \} )$, i.e., $\mathcal{S}_k$ is chosen as the space spanned by the observed gradients. With a basic lemma, we show that generating such $x_k$ is equivalent to orthogonality of gradients. In consequence, the information considered is in terms of iterates and gradients only.

In essence, the orthogonality of the gradients is viewed as the basic property of the method. This gives a way of deriving the the search direction as a consequence of the characterization.  To further emphasize the pedagogical nature of this derivation we also give a geometrical interpretation.

Our basic lemma gives a characterization of $x_k$ such that $g_k$ is
orthogonal to all previous gradients $g_i$, $i=0,\dots,k-1$. 
Note that there is a one-to-one
correspondence between $x_k$ and $g_k$, due to the nonsingularity of
$H$, by $g_k=Hx_k+c$ and $x_k = H\inv (g_k -c)$.  
Therefore, we may characterize $x_k$ by properties of $g_k$.
\begin{lemma}\label{lem-AF}
For a given initial point $x_0$, let $x_k$, $k=1,2,\dots$, be defined
as the minimizer of $q(x)$ subject to $x \in x_0 + \span ( \{g_0, \dots,
g_{k-1}\} )$, where $g_i=\grad q(x_i)$,  $i=0,1,\dots$. Then, $x_k$ is
well defined and may be characterized by
\[
g_k^T g_i=0, \quad i=0,\dots,k-1.
\]
In addition, there is an $r$, with $r\le n$, such that $g_r=0$ and
$g_k\ne 0$, $k=0,\dots,r-1$.
\end{lemma}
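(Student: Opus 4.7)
The plan is to proceed by induction on $k$, simultaneously establishing three facts: (i) $\{g_0,\ldots,g_{k-1}\}$ is linearly independent, so that $\mathcal{S}_k$ has dimension $k$ and $x_k$ is well defined; (ii) the first-order optimality conditions for the problem defining $x_k$ are equivalent to $g_k^T g_i = 0$ for $i=0,\ldots,k-1$; and (iii) either $g_k=0$, in which case the process terminates, or $g_k\ne 0$ and the independence property extends to $\{g_0,\ldots,g_k\}$.

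For the existence and uniqueness part, I would argue that because $H$ is symmetric positive definite, $q$ is strictly convex and coercive on every affine subspace. Hence the restriction of $q$ to the (non-empty) affine set $x_0 + \mathcal{S}_k$ attains a unique minimizer, so $x_k$ is well defined once $\mathcal{S}_k$ is understood as a $k$-dimensional subspace. The first-order optimality condition for this equality-constrained convex quadratic program is $g_k \perp \mathcal{S}_k$, which since $\mathcal{S}_k = \span(\{g_0,\ldots,g_{k-1}\})$ is exactly the orthogonality $g_k^T g_i=0$ for $i=0,\ldots,k-1$. This step gives the characterization claimed in the statement.

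The inductive step for linear independence is then almost immediate: if $g_k \ne 0$ and $g_k$ is orthogonal to every vector in $\mathcal{S}_k$, then $g_k \notin \mathcal{S}_k$, so $\{g_0,\ldots,g_k\}$ remains linearly independent, and the next iteration may be carried out in the same manner. The base case $k=0$ is either trivial (if $g_0 \ne 0$, the set $\{g_0\}$ is independent) or gives immediately $r=0$.

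Finally, the termination claim follows from a dimension argument: as long as $g_k \ne 0$ at every step, the inductive step produces an ever-growing collection of mutually orthogonal non-zero vectors in $\mathbb{R}^n$; since such a collection can have at most $n$ elements, there must be a smallest index $r \le n$ at which $g_r=0$. The main obstacle, and the only place requiring care, is the coupling between orthogonality and linear independence in the induction — one must resist the temptation to ``assume'' $\mathcal{S}_k$ is $k$-dimensional, and instead derive this from the non-vanishing of the earlier gradients together with the orthogonality produced by the previous optimality condition.
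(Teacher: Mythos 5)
Your proof is correct and follows essentially the same route as the paper: strict convexity of $q$ gives a unique minimizer over the affine set, the first-order conditions in the coordinates of $\mathcal{S}_k$ yield $g_k^T g_i=0$, and termination follows because $\mathbb{R}^n$ admits at most $n$ mutually orthogonal nonzero vectors. The one place you diverge is the inductive bookkeeping of linear independence of $\{g_0,\dots,g_{k-1}\}$, which you present as the crux; the paper sidesteps this entirely by writing $x(v)=x_0+\sum_{j=0}^{k-1} v_j g_j$ with possibly redundant coordinates and relying on uniqueness of the minimizing \emph{point} $x_k$ (guaranteed by strict convexity of $q$ in $x$) rather than uniqueness of $v^*$. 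So your independence induction is sound and does buy the extra fact that $\dim\mathcal{S}_k=k$ for $k\le r$, but it is not needed for well-definedness of $x_k$ or for the stated characterization; conversely, the paper's parametrization is slightly leaner at the cost of not recording that dimension claim explicitly.
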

\begin{proof}
Any vector $x \in x_0 + \span ( \{g_0, \dots, g_{k-1} \} )$ can be written as $x(v) = x_0 + \sum_{j=0}^{k-1} v_j g_j$, for some $v_j \in \mathbb{R}$, $j=0,\dots,k-1$.  Since $H$ is symmetric and positive definite, $q(x)$ is a strictly convex function.  Therefore, for a given $k$, necessary and sufficient  conditions for $x_k = x(v^*)$ to minimize $q(x(v))$ are given by
  \[
\left. \frac{\partial q(x(v))}{\partial v_i} \right\vert_{v = v^*}  = 0, \quad i=0,\dots,k-1,
\]
which by the chain rule can be written as $\nabla q(x_0 + \sum_{j=0}^{k-1} v_j^* g_j)^Tg_i =0$, $i=0,\dots,k-1$. The result follows with $x_k = x_0 + \sum_{j=0}^{k-1} v^*_j g_j$. Finally, there can be at most $n$ nonzero orthogonal gradients
$g_i$. Therefore, $g_r=0$ for some $r$, $0\le r \le n$.
\end{proof}\\
As proclaimed, for a given $x_0$, the sequence $x_k$, $k=1,2,\dots$, in Lemma~\ref{lem-AF} is such that $x_k$, $k=1,2,\dots$, is characterized as the the solution of (\ref{eq:OPk}) with $\mathcal{S}_k =  \span ( \{g_0, \dots, g_{k-1}\} )$.

For a given initial point $x_0$, our interest is now iteration $k$, with $0\le k< r$, so that $g_i\ne
0$, $i=0,\dots,k$. We have $x_k$ with the properties of
Lemma~\ref{lem-AF} and want to generate $p_k$ and $\theta_k$ such that
$x_{k+1}=x_k+\theta_k p_k$ satisfies Lemma~\ref{lem-AF} with $k$
replaced by $k+1$. This means that we
want $\theta_k$ and $p_k$ such that $x_{k+1}$ gives $g_{k+1}^T g_i=0$,
$i=0,\dots,k$, with $x_{k+1} \in x_0 + \span ( \{g_0, \dots,
g_{k}\} )$. As $g_k \neq 0$ it follows from $g_{k+1}^T g_k = 0$
that $g_{k+1}\ne g_k$. Therefore, $x_{k+1}\ne x_k$ since
$x_{k+1}-x_k=H\inv (g_{k+1}-g_k)$, so that $\theta_k \neq 0$ and $p_k
\neq 0$. Moreover, since $x_{i+1}-x_0 \in \span (\{g_0, \dots,g_{i}\})$,
$i=0,\dots,k$, it must also hold that
%
\begin{equation}\label{eqn-CG}
g_{k+1}^T (x_{i+1}-x_0)=0, \quad i=0,\dots,k.
\end{equation}
We will show that the first $k$ equations of (\ref{eqn-CG}) give a characterization of the search direction and that the last equation gives the step length. As $g_{k+1} = \grad q (x_k+\theta_k p_k)= g_k + \theta_k Hp_k$, (\ref{eqn-CG}) takes
the form
\begin{equation}\label{eqn-CGII}
(g_k + \theta_k H p_k)^T(x_{i+1}-x_0)=0, \quad i=0,\dots,k.
\end{equation}
The first $k$ equations of (\ref{eqn-CGII}) are equivalent to
\begin{equation}\label{eqn-CGIII}
g_k^T(x_{i+1}-x_0)+\theta_k p_k^TH (x_{i+1}-x_0)=0, \quad i=0,\dots,k-1.
\end{equation}
Analogous to (\ref{eqn-CG}), we have $g_k^T(x_{i+1}-x_0)=0$,
$i=0,\dots,k-1$. As $\theta_k\neq 0$, (\ref{eqn-CGIII}) gives
\begin{equation}\label{eqn-CGIV}
p_k^TH (x_{i+1}-x_0)=0, \quad i=0,\dots,k-1,
\end{equation}
independently of $\theta_k$. Note that
\begin{equation}\label{eqn-CGV}
H(x_{i+1}-x_0)=Hx_{i+1}+c-(Hx_0+c)=g_{i+1}-g_0,
\end{equation}
so that a combination of (\ref{eqn-CGIV}) and (\ref{eqn-CGV}) gives
\begin{equation*}
p_k^TH (x_{i+1}-x_0)=p_k^T(g_{i+1}-g_0)=0, \quad i=0,\dots,k-1.
\end{equation*}
Consequently, it must hold that
\begin{equation}\label{eqn-cg1}
p_k^Tg_i=c_k, \quad i=0,\dots,k,
\end{equation}
for an arbitrary constant $c_k$. The choice of $c_k$ gives a scaling
of $p_k$. The condition given by (\ref{eqn-cg1}) is illustrated
geometrically for $k=1$ and $k=2$ in Figure~\ref{fig:fig}.

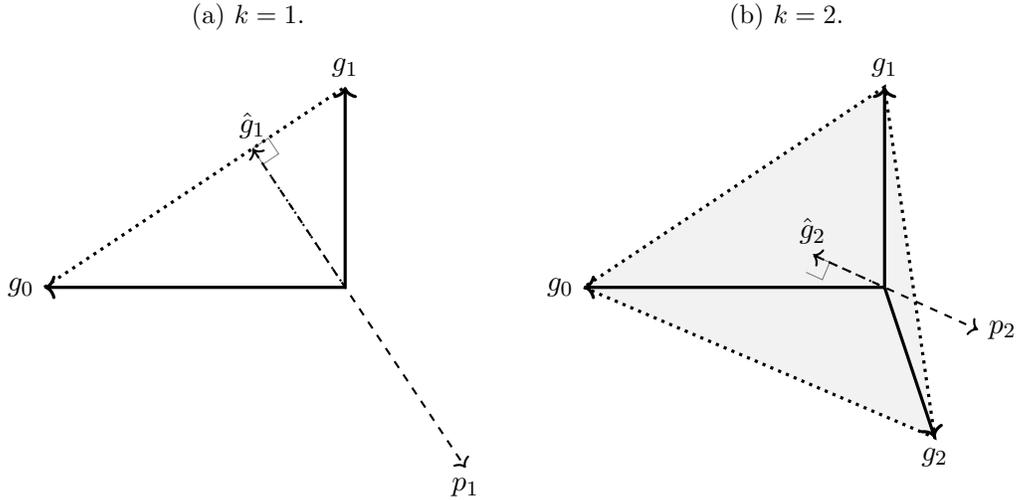
\begin{figure}[H]
    \centering
    \begin{subfigure}[t]{0.5\textwidth}
        \centering
          \caption{$k=1$.}
\begin{tikzpicture}
\coordinate[label=left:{$g_0$}]  (A) at (-12/3,0);
\coordinate (B) at (0,0);
\coordinate[label=above:{$g_1$}] (C) at (0,8/3);
\coordinate[label=below:{{\color{white} $g_2$}},white] (D) at (2/3,-6/3);

\draw[->,very thick] (B) -- (A);
\draw[->,very thick] (B) -- (C);
\draw[dotted,very thick] (A) -- (C);
\coordinate (a) at ($(B)!(A)!(C)$);
\coordinate (b) at ($(A)!(B)!(C)$);
\coordinate (c) at ($(A)!(C)!(B)$);
\draw[dotted, thick] (B) -- (b) node[above] {$\hat{g}_1$};
\tkzFindAngle(B,b,C)
\tkzGetAngle{angleBbC}; \FPround\angleBbC\angleBbC{0}
\tkzMarkRightAngle[draw=black,opacity=.5](B,b,C)
\draw [<->, add= 0 and 1.3, dashed, thick] (b) to (B) node[below] {$p_1$};
\end{tikzpicture}
        \label{fig:k2}
    \end{subfigure}%
    ~ 
    \begin{subfigure}[t]{0.5\textwidth}
     \caption{$k=2$.}
        \centering
\begin{tikzpicture}
\coordinate[label=left:{$g_0$}]  (A) at (-12/3,0);
\coordinate[label=above:{$g_1$}] (C) at (0,8/3);
\coordinate[label=below:{$g_2$}] (B) at (2/3,-6/3);
\filldraw[fill = gray!10, draw = white] (A) -- (B) -- (C) -- cycle;
\coordinate (D) at (0,0);
\coordinate (d) at ($(A)!(D)!(C)$);
\draw[->,very thick] (D) -- (B);
\draw[->,very thick] (D) -- (C);
\coordinate (a) at ($(B)!(A)!(C)$);
\coordinate (b) at ($(A)!(B)!(C)$);
\coordinate (c) at ($(A)!(C)!(B)$);
\coordinate (O) at (intersection of A--a and B--b);
\draw[dotted, thick] (D) -- (O) node[above] {$\hat{g}_2$};
\draw [<->, add= 0 and 1.3, dashed, thick] (O) to (D) node[right] {$p_2$};
\tkzMarkRightAngle[draw=black,opacity=.5](c,O,D)
\draw[gray!10,thick] (O) to (c); 
\draw[very thick,dotted] (A) -- (B) -- (C) -- cycle;
\draw[->,very thick] (D) -- (A);
\tkzFindAngle(B,b,C)
\end{tikzpicture}
        \label{fig:k2}
    \end{subfigure}
    \caption{Illustration of the search direction $p_k$ as the
      iterations proceed. The vector $\hat{g}_k$ represents the vector
      of minimum Euclidean norm in the affine span of
      $g_0$, $g_1$, \dots, $g_{k}$, or equivalently the gradient of minimum
      Euclidean norm evaluated on $x_0 + \span(\{g_0,
\dots, g_{k-1}\})$, as will be discussed later.}
    \label{fig:fig}
\end{figure}
It remains to characterize $p_k$ that satisfies (\ref{eqn-cg1}).  The
requirement that $x_{k+1}\in x_0 + \span (\{g_0, \dots, g_{k}\})$ for
$x_{k}\in x_0 + \span (\{g_0, \dots, g_{k-1}\})$ means that
\begin{equation}\label{eqn-p}
p_k= \sum_{j=0}^k \gamma_j g_j,
\end{equation}
where $\gamma_j$, $j=0,\dots,k$, are to be determined. A combination
of (\ref{eqn-cg1}) and (\ref{eqn-p}), using the orthogonality of the
$g_i$s, gives
\[
c_k = p_k^T g_i = \sum_{j=0}^k \gamma_j g_j^T g_i = \gamma_i g_i^T
g_i,
\]
so that $\gamma_i = \frac{c_k}{g_i^T g_i}$, $i=0,\dots,k,$ which gives
\begin{equation}\label{eqn-pk}
p_k = c_k \sum_{i=0}^k \frac1{g_i^T g_i} g_i.
\end{equation}
A comparison of $p_k$ given by (\ref{eqn-pk}) and $p_k^{CG}$ given by (\ref{eqn-pkCGg}) already shows that the directions are the same. The characterization of $p_k$ given by (\ref{eqn-pk}) is in terms of
all gradients $g_i$, $i=0,\dots,k$. To get the simple recursion of the method of conjugate gradients, we may use (\ref{eqn-pk}) to write
\begin{eqnarray*}
p_k & = & c_k \sum_{i=0}^k \frac1{g_i^T g_i} g_i
= \frac{c_k }{g_k^T g_k} g_k + c_k \sum_{i=0}^{k-1} \frac1{g_i^T
  g_i} g_i \\
& = &
\frac{c_k }{g_k^T g_k} g_k + \frac{c_k}{c_{k-1}} \left( c_{k-1}\sum_{i=0}^{k-1} \frac1{g_i^T
  g_i} g_i\right) \\
& = &
\frac{c_k }{g_k^T g_k} g_k + \frac{c_k}{c_{k-1}} p_{k-1},
\text{with}
p_0=\frac{c_0}{g_0^T g_0} g_0.
\end{eqnarray*}
The choice $c_k=-g_k^T g_k$ gives the scaling of the method of
conjugate gradients, i.e., $p_k^{CG}$ of (\ref{eqn-pkCG}),
which is the standard way of describing the search direction of the
method of conjugate gradients. Finally, it remains to determine $\theta_k$. The last equation of (\ref{eqn-CGII}) in combination with $x_{k+1}=x_k+\theta_k p_k$ gives
\begin{equation} \label{eqn-thetak}
(g_k + \theta_k H p_k)^T(x_{k}-x_0) + \theta_k (g_k + \theta_k H p_k)^Tp_k =0.
\end{equation}
By the choice of $p_k$ it holds that $(g_k + \theta_k H p_k)^T(x_{k}-x_0)=0$. Moreover, since $\theta_k \neq 0$, by (\ref{eqn-thetak}) it must hold that $(g_k + \theta_k H p_k)^Tp_k =0$ which gives (\ref{eqn-theta}). 

Therefore, by our approach we have derived two equivalent ways of writing the
search direction of the method of conjugate gradients. In addition, we have shown that exact linesearch also follows from the characterization given in Lemma~\ref{lem-AF}.

\section{Relationship to the minimum Euclidean norm gradient}

A benefit of deriving the formula for $p_k$ given by (\ref{eqn-pkCGg})
is that it gives a straightforward way to relate $p_k^{CG}$ to the
vector of minimum Euclidean norm in the affine span of $g_0$, $g_1$,
\dots, $g_k$, which we will show is equivalent to the gradient of
minimum Euclidean norm evaluated on the affine span of $x_0$, $x_1$, \dots,
$x_k$. As in the previous section, we will consider iteration $k$,
with $k<r$. For $k=r$, $g_r=0$ immediately gives this minimum norm
vector zero. As we will show, this minimum-norm vector $\ghat_k$ is
nonzero and may be characterized by
\begin{equation}\label{eqn-ghat1}
\ghat_k^T(g_i - \ghat_k)=0, \quad i=0,\dots,k,
\end{equation}
which is what is illustrated in Figure~\ref{fig:fig} for $k=1$ and
$k=2$. By (\ref{eqn-ghat1}), $\ghat_k^T g_i = \ghat_k^T \ghat_k$,
$i=0,\dots,k$, so that $\ghat_k$ satisfies (\ref{eqn-cg1}) with
$c_k=\ghat_k^T \ghat_k$.  Consequently,
\begin{equation}\label{eqn-cgmr}
p_k^{CG}=-\frac{g_k^T g_k}{\ghat_k^T \ghat_k} \ghat_k.
\end{equation}
It remains to verify that a vector $\ghat_k$, which belongs to the
affine span of $g_0$, $g_1$, \dots, $g_k$ and satisfies
(\ref{eqn-ghat1}), is a nonzero vector of minimum Euclidean norm in the affine
span of $g_0$, $g_1$, \dots, $g_k$. Note that if $\sum_{i=0}^k
\alpha_i =1$, then
\begin{equation}\label{eqn-g}
g = \sum_{i=0}^k \alpha_i g_i = \sum_{i=0}^k \alpha_i (H x_i + c)
= H (\sum_{i=0}^k \alpha_i x_i) + (\sum_{i=0}^k
  \alpha_i) c = Hx+c,
\end{equation}
for
\begin{equation}\label{eqn-x}
x= \sum_{i=0}^k \alpha_i x_i = x_0 + \sum_{i=1}^k \alpha_i (x_i-x_0).
\end{equation}
A combination of (\ref{eqn-g}) and (\ref{eqn-x}) shows that any $g$
which is given by an affine combination of $g_0$, $g_1$, \dots, $g_k$
is the gradient of $q(x)$ at an $x$ given by the corresponding affine
combination of $x_0$, $x_1$, \dots, $x_k$. In addition, by
(\ref{eqn-x}), the affine span of $x_0$, $x_1$, \dots, $x_k$ is
equivalent to $x_0 + \span (\{x_1-x_0, \dots, x_k-x_0\})$. Finally, by
the construction of $x_i$, $i=1,\dots,k$, it follows that $x_0 +
\span (\{x_1-x_0, \dots, x_k-x_0\})$ is equivalent to $x_0 + \span(\{g_0,
\dots, g_{k-1}\})$. Consequently, since $k<r$, it must hold that
$\ghat_k\ne 0$. In particular, a combination of (\ref{eqn-pk}) and
(\ref{eqn-cgmr}) shows that for $\ghat_k$ to belong to the affine span
of $g_0$, $g_1$, \dots, $g_k$, we must have
\begin{equation}\label{eqn-ghat2}
\ghat_k = \frac1{\sum_{j=0}^k \frac1{g_j^T g_j}}\sum_{i=0}^k \frac1{g_i^T g_i}
g_i.
\end{equation}
It is
straightforward to verify that $\ghat_k^T g_i = \ghat_k^T \ghat_k$,
$i=0,\dots,k$, holds with $\ghat_k$ given by (\ref{eqn-ghat2}), so the
description of $\ghat_k$ given by (\ref{eqn-ghat2}) is consistent with
(\ref{eqn-ghat1}).
In addition, for $g=\sum_{i=0}^k \alpha_i
g_i$ with $\sum_{i=0}^k \alpha_i=1$, we have
\begin{equation}\label{eqn-orth}
(g-\ghat_k)^T \ghat_k = (\sum_{i=0}^k \alpha_i g_i -\ghat_k)^T\ghat_k
 = \sum_{i=0}^k \alpha_i (g_i -\ghat_k)^T\ghat_k=0,
\end{equation}
where (\ref{eqn-ghat1}) has been used in the last step.
Therefore, by (\ref{eqn-orth}) it follows that
\begin{eqnarray*}
g\T g & = & (\ghat_k + (g-\ghat_k))^T (\ghat_k + (g-\ghat_k))
\nonumber \\ \label{eqn-normg}
& = &
\ghat_k^T \ghat_k + 2 (g-\ghat_k)^T \ghat_k 
+ (g-\ghat_k)^T (g-\ghat_k) \\
& = &\ghat_k^T \ghat_k  + (g-\ghat_k)^T (g-\ghat_k) \ge \ghat_k^T \ghat_k,
\end{eqnarray*}
so that $\ghat_k$ is indeed the vector of minimum Euclidean norm in
the affine span of $g_0$, $g_1$, \dots, $g_k$, or equivalently, the
gradient of minimum Euclidean norm evaluated on 
$x_0 + \span(\{g_0, \dots, g_{k-1}\})$.

Our description of $\ghat_k$ is based on \cite{FO14}. Other
derivations of the result on the connection between $p_k^{CG}$ and
$\ghat_k$, which is the search direction in the method of shortest
residuals, can be found in
e.g.,~\cite{H80,P09}.

\section{Summary}
Our aim has been to derive the method of conjugate gradients in a
straightforward manner based on gradients only. The approach
facilitates a geometrical interpretation of the consecutive search
directions. We have limited the discussion to the derivation and the
relationship to the gradient of minimum Euclidean norm evaluated on
the affine span of the iterates generated so far. Conventional results on
conjugacy and relations to Krylov subspaces can henceforth be derived
in a straightforward fashion.

\bibliography{refs}

\begin{thebibliography}{10}

\bibitem{AD74}
P.~R. Adby and M.~A.~H. Dempster.
\newblock {\em Introduction to optimization methods}.
\newblock Chapman and Hall, London; Halsted Press [John Wiley \& Sons], New
  York, 1974.

\bibitem{AK08}
G.~Allaire and S.~M. Kaber.
\newblock {\em Numerical linear algebra}, volume~55 of {\em Texts in Applied
  Mathematics}.
\newblock Springer, New York, 2008.
\newblock Translated from the 2002 French original by Karim Trabelsi.

\bibitem{BKK17}
L.~Beilina, E.~Karchevskii, and M.~Karchevskii.
\newblock {\em Numerical linear algebra: theory and applications}.
\newblock Springer, Cham, 2017.
\newblock With an erratum.

\bibitem{B15}
{\AA}.~Bj\"{o}rck.
\newblock {\em Numerical methods in matrix computations}, volume~59 of {\em
  Texts in Applied Mathematics}.
\newblock Springer, Cham, 2015.

\bibitem{BC15}
C.~L. Byrne.
\newblock {\em A first course in optimization}.
\newblock CRC Press, Boca Raton, FL, 2015.

\bibitem{CZ13}
E.~Chong and S.~Zak.
\newblock {\em An Introduction to Optimization}.
\newblock Wiley Series in Discrete Mathematics and Optimization. Wiley, 2013.

\bibitem{CGT00}
A.~R. Conn, N.~I.~M. Gould, and P.~L. Toint.
\newblock {\em Trust-region methods}.
\newblock MPS/SIAM Series on Optimization. Society for Industrial and Applied
  Mathematics (SIAM), Philadelphia, PA; Mathematical Programming Society (MPS),
  Philadelphia, PA, 2000.

\bibitem{F81}
R.~Fletcher.
\newblock {\em Practical methods of optimization. {V}ol. 2}.
\newblock John Wiley \& Sons, Ltd., Chichester, 1981.
\newblock Constrained optimization, A Wiley-Interscience Publication.

\bibitem{FO14}
A.~Forsgren and T.~Odland.
\newblock On solving symmetric systems of linear equations in an unnormalized
  {Krylov} subspace framework.
\newblock Preprint arXiv:1409.4937 [math.OC], 2014.

\bibitem{GMW81}
P.~E. Gill, W.~Murray, and M.~H. Wright.
\newblock {\em Practical Optimization}.
\newblock Academic Press, London and New York, 1981.
\newblock {ISBN} 0-12-283952-8.

\bibitem{GO89}
G.~H. Golub and D.~P. O'Leary.
\newblock Some history of the conjugate gradient and {L}anczos algorithms:
  1948--1976.
\newblock {\em SIAM Rev.}, 31(1):50--102, 1989.

\bibitem{GV13}
G.~H. Golub and C.~F. Van~Loan.
\newblock {\em Matrix computations}.
\newblock Johns Hopkins Studies in the Mathematical Sciences. Johns Hopkins
  University Press, Baltimore, MD, fourth edition, 2013.

\bibitem{H80}
M.~R. Hestenes.
\newblock {\em Conjugate direction methods in optimization}, volume~12 of {\em
  Applications of Mathematics}.
\newblock Springer-Verlag, New York-Berlin, 1980.

\bibitem{HS52}
M.~R. Hestenes and E.~Stiefel.
\newblock Methods of conjugate gradients for solving linear systems.
\newblock {\em J. Research Nat. Bur. Standards}, 49:409--436 (1953), 1952.

\bibitem{K95}
C.~T. Kelley.
\newblock {\em Iterative methods for linear and nonlinear equations}, volume~16
  of {\em Frontiers in Applied Mathematics}.
\newblock Society for Industrial and Applied Mathematics (SIAM), Philadelphia,
  PA, 1995.
\newblock With separately available software.

\bibitem{LS13}
J.~Liesen and Z.~Strako\v{s}.
\newblock {\em Krylov subspace methods}.
\newblock Numerical Mathematics and Scientific Computation. Oxford University
  Press, Oxford, 2013.
\newblock Principles and analysis.

\bibitem{LY16}
D.~G. Luenberger and Y.~Ye.
\newblock {\em Linear and nonlinear programming}, volume 228 of {\em
  International Series in Operations Research \& Management Science}.
\newblock Springer, Cham, fourth edition, 2016.

\bibitem{NW06}
J.~Nocedal and S.~J. Wright.
\newblock {\em Numerical optimization}.
\newblock Springer Series in Operations Research and Financial Engineering.
  Springer, New York, second edition, 2006.

\bibitem{PAENSO16}
M.~Patriksson, N.~Andréasson, A.~Evgrafov, E.~Gustavsson, and M.~Önnheim.
\newblock {\em An Introduction to Continuous Optimization: Foundations and
  Fundamental Algorithms: Third Edition}.
\newblock Dover Books on Mathematics, 12 2017.

\bibitem{P04}
P.~Pedregal.
\newblock {\em Introduction to optimization}, volume~46 of {\em Texts in
  Applied Mathematics}.
\newblock Springer-Verlag, New York, 2004.

\bibitem{P09}
R.~Pytlak.
\newblock {\em Conjugate gradient algorithms in nonconvex optimization},
  volume~89 of {\em Nonconvex Optimization and its Applications}.
\newblock Springer-Verlag, Berlin, 2009.

\bibitem{S03}
Y.~Saad.
\newblock {\em Iterative methods for sparse linear systems}.
\newblock Society for Industrial and Applied Mathematics, Philadelphia, PA,
  second edition, 2003.

\bibitem{SY06}
W.~Sun and Y.-X. Yuan.
\newblock {\em Optimization theory and methods}, volume~1 of {\em Springer
  Optimization and Its Applications}.
\newblock Springer, New York, 2006.
\newblock Nonlinear programming.

\end{thebibliography}
\bibliographystyle{myplain}

\end{document}